\documentclass[a4paper,11pt]{article}
\usepackage{amsmath}
\usepackage{latexsym,a4wide}
\usepackage{amssymb}

\setcounter{MaxMatrixCols}{10}

\input xypic
\input xy
\xyoption{all}

\newtheorem{example}{Example}[section]
\newtheorem{definition}[example]{Definition}
\newtheorem{proposition}[example]{Proposition}
\newtheorem{theorem}[example]{Theorem}
\newtheorem{corollary}[example]{Corollary}
\newtheorem{lemma}[example]{Lemma}

\newtheorem{remark}[example]{Remark}
\newenvironment{proof}{\noindent \textbf{Proof:} }{ $\Box$ \mbox{} }

\begin{document}

\author{U. Ege Arslan \ and \"{O}. G\"urmen }
\title{CHANGE OF BASE FOR COMMUTATIVE ALGEBRAS}

\maketitle

\begin{abstract}
In this paper we examine on a pair of adjoint functors $(\phi ^{\ast
},\phi _{\ast })$ for a subcategory of the category of crossed modules
over commutative algebras where $\phi ^{\ast }:\mathbf{XMod}$\textbf{/}$%
Q\rightarrow $ $\mathbf{XMod/}P$, pullback, which enables us to move
from crossed $Q$-modules to crossed $P$-modules by an algebra
morphism $\phi :P\rightarrow Q$ and $\phi _{\ast }:\mathbf{XMod}$\textbf{/}$P\rightarrow $ $\mathbf{XMod/}Q$, induced. We note that this adjoint functor pair $(\phi ^{\ast
},\phi _{\ast })$ makes $p:\mathbf{XMod}\rightarrow $ $k$\textbf{-Alg }into a bifibred category over  $k$\textbf{-Alg}, the category of commutative algebras, where $p$ is given by $p(C,R,\partial )=R.$
 Also, some examples and results on
induced crossed modules are given.
\end{abstract}

\begin{tabular}{l}
\textbf{A. M. S. Classification}:  18A40,18A30. \\
\textbf{Keywords}:  Induced modules, induced crossed modules, base change
functors.
\end{tabular}
\section*{Introduction}

Let $S$ be a ring. Then there is a category {\bf Mod}/$S$ of modules over $%
S. $ If $\phi :S\rightarrow R$ is a ring homomorphism, then there is
a functor $\phi ^{\ast }$ from ${\bf Mod}/R$ to ${\bf Mod}/S$ where
$S$ acts on an $R$-module via $\phi .$ This functor has a left
adjoint $\phi _{\ast }$ to $\phi ^{\ast }$ giving the well known
{\it induced module} via tensor product. This construction is
known as a ``{\it change of base}'' in a general module theory
setting. Brown and Higgins \cite{[bh]} generalised that to higher
dimension for the group theoretical case, that is, a morphism $\phi
:P\rightarrow Q$ of groups determines a pullback functor $\phi ^{\ast }:{\bf %
XMod}${\bf /}$Q\rightarrow $ ${\bf XMod/}P,$ where ${\bf XMod/}Q$
denotes a subcategory of the category of crossed modules ${\bf XMod}$
and has as objects those crossed modules with a fixed group $Q$ as the ``base''. The left adjoint $\phi _{\ast }$ to pullback gives the {\it
induced crossed modules}. This is also given by pushouts of crossed
modules.

In this work we will consider the appropriate analogue of that in
the theory of crossed modules in commutative algebras. Although this
construction has already been worked by Porter \cite{[p1]} and
Shammu \cite{[s]}, we will reconsider and develop that in the light
of the works of Brown and Wensley \cite{[bw1],[bw2]}. The purpose of
this paper is to give some new examples and results on crossed
modules induced by a morphism of algebras $\phi :S\rightarrow R$ in
the case when $\phi $ is the inclusion of an ideal. In
the applications to commutative algebras, the induced crossed modules play
an important role since the free crossed modules which are related to Koszul
complexes given by Porter \cite{[p2]} are the special case of induced
crossed modules. In \cite{[p2]}, any finitely generated free crossed module $%
C\rightarrow R$ of commutative algebras was shown to have $C\cong
R^{n}/d(\Lambda ^{2}R^{n})$, i.e. the $2$nd Koszul complex term module the $%
2 $-boundaries where $d:\Lambda ^{2}R^{n}\rightarrow R^{n}$ the Koszul
differential. So, we think that the induced crossed modules of commutative
algebras give useful information on Koszul-like constructions.

Conventions: Throughout this paper $k$ is a fixed commutative ring, $R$ is a $k$%
-algebra with identity. All $k$-algebras will be assumed
commutative and associative but there will not be requiring
algebras to have unit elements unless stated otherwise.

Acknowledgements: The authors wishes to thank Z.Arvasi for his helpful comments.

\section{ Crossed Modules of Commutative Algebras}

{\em A crossed module} of algebras, $(C,R,\partial ),$ consists of an $R$%
-algebra $C$ and a $k$-algebra $R$ with an action of $R$ on $C,$ $%
(r,c)\mapsto r\cdot c$ for $c\in C,r\in R,$ and an $R$-algebra morphism $%
\partial :C\rightarrow R$ satisfying the following condition for all $%
c,c^{\prime }\in C$
\[
\partial (c)\cdot c^{\prime }=cc^{\prime }.
\]
This condition is called the{\em \ Peiffer identity}. We call $R,$
the base algebra and $C,$ the top algebra. When we wish to
emphasise the base algebra $R,$ we call $(C,R,\partial ),$ a
crossed $R$-module.

\smallskip {\it A morphism of crossed modules} from $(C,R,\partial )$ to $%
(C^{\prime },R^{\prime },\partial ^{\prime })$ is a pair $(f,\phi )$ of $k$%
-algebra morphisms $f:C\longrightarrow C^{\prime },\phi
:R\longrightarrow R^{\prime }$ such that

\[
\text{(i) }\partial ^{\prime }f=\phi \partial \qquad \text{and}\qquad \text{%
(ii) \ }f(r\cdot c)=\phi (r)\cdot f(c)
\]
for all $c\in C,r\in R.$ Thus one can obtain the category {\bf
XMod} of crossed modules of algebras. In the case of a morphism
$(f,\phi )$ between crossed modules with the same base $R$, say,
where $\phi $ is the identity
on $R$,

$$
\xymatrix@R=20pt@C=20pt{
  C \ar[rr]^{f} \ar[dr]_{\partial}
                &  &    C' \ar[dl]^{\partial'}    \\
                & R                 }$$
then we say that $f$ is a morphism of crossed $R$%
-modules. This gives a subcategory {\bf XMod/}R of {\bf XMod.} \

\subsection{Examples of Crossed Modules}

(i) Any ideal, $I$, in $R$ gives an inclusion map
$I\longrightarrow R,$ which is a crossed module then we will say
$\left( I,R,i\right) $ is \ an ideal pair. In this case, of
course, $R$ acts on $I$ by multiplication and the inclusion
homomorphism $i$ makes $\left( I,R,i\right) $ into a crossed
module, an \textquotedblleft inclusion crossed
modules\textquotedblright . Conversely,

\begin{lemma}
If $(C,R,\partial )$ is a crossed module, $\partial (C)$ is an
ideal of $R.$ $\Box $
\end{lemma}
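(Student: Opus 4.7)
The plan is to unpack the condition that $\partial$ is an $R$-algebra morphism and show it forces the image to absorb multiplication by arbitrary elements of $R$. Since $\partial$ is in particular a $k$-algebra homomorphism, $\partial(C)$ is automatically a $k$-subalgebra of $R$, so the only nontrivial point is ideal-closure under multiplication by elements of $R$ that need not lie in $\partial(C)$.

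First I would fix $r\in R$ and an arbitrary element $x\in\partial(C)$, writing $x=\partial(c)$ for some $c\in C$. The key observation is the interpretation of ``$\partial$ is an $R$-algebra morphism'': $R$ acts on itself by multiplication, and $\partial$ is required to be equivariant, giving the identity $\partial(r\cdot c)=r\,\partial(c)$. Hence $rx=r\,\partial(c)=\partial(r\cdot c)\in\partial(C)$, which is exactly the ideal condition.

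There is essentially no obstacle here; the Peiffer identity is not even needed for this direction, because the statement is a formal consequence of $R$-equivariance of $\partial$. The only thing worth flagging is that in the conventions of the paper algebras need not be unital, so one should not try to recover $rx\in\partial(C)$ from the Peiffer identity by writing $r=\partial(c'')$, since in general $r$ need not be of that form; the equivariance of $\partial$ is what does the work.
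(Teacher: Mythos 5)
Your proof is correct and is exactly the argument the paper leaves implicit (the lemma is stated with the proof omitted): additive closure comes from $\partial$ being a $k$-algebra morphism, and $r\,\partial(c)=\partial(r\cdot c)$ from $R$-equivariance gives absorption. Your remarks that the Peiffer identity is not needed and that one cannot write $r=\partial(c'')$ in the non-unital setting are both accurate.
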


(ii) Any $R$-module $M$ can be considered as an $R$-algebra with
zero multiplication and hence the zero morphism $0:M\rightarrow R$
sending everything in $M$ to the zero element of $R$ is a crossed
module. Again conversely:

\begin{lemma}
If $(C,R,\partial )$ is a crossed module, \text{Ker}$\partial $ is an
ideal in $C$ and inherits a natural $R$-module structure from
$R$-action on $C.$ Moreover, $\partial (C)$ acts trivially on
\text{Ker}$\partial ,$ hence \text{Ker}$\partial $ has a natural $R/\partial (C)$-module structure. $\Box $
\end{lemma}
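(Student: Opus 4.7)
The plan is to verify the three assertions in order: first that $\ker\partial$ is an ideal in $C$; second that it inherits a well-defined $R$-module structure; and finally that this action factors through $R/\partial(C)$ because $\partial(C)$ acts trivially on it. Since $\partial(C)$ being an ideal of $R$ is already established by the previous lemma, the quotient $R/\partial(C)$ is available as a $k$-algebra and only the triviality of the action needs serious work.

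For the first two points I would argue routinely. Since $\partial$ is a $k$-algebra morphism, $\ker\partial$ is automatically an ideal of $C$. For the $R$-module structure, I would take $k\in\ker\partial$ and $r\in R$ and compute $\partial(r\cdot k)=r\,\partial(k)=0$ using that $\partial$ is an $R$-algebra morphism; this shows the $R$-action on $C$ restricts to $\ker\partial$. The remaining module axioms (bilinearity, associativity of the action, unit condition when applicable) are inherited verbatim from the fact that $C$ is itself an $R$-algebra under this action.

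The main step, and the only one requiring the crossed module axiom, is showing $\partial(C)$ acts trivially on $\ker\partial$. Here the trick is to apply the Peiffer identity twice. Given $c\in C$ and $k\in\ker\partial$, the identity $\partial(k)\cdot c=kc$ together with $\partial(k)=0$ forces $kc=0$ in $C$. By commutativity of $C$ (inherited from its $R$-algebra structure) this also gives $ck=0$. Applying Peiffer in the other direction, $\partial(c)\cdot k=ck=0$, which is exactly the statement that $\partial(C)$ annihilates $\ker\partial$. I expect this double-use of Peiffer to be the only non-formal manoeuvre in the proof.

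Finally, once triviality of the $\partial(C)$-action is in hand, the $R$-action on $\ker\partial$ descends through the quotient map $R\twoheadrightarrow R/\partial(C)$: for $r\in R$, $x\in\partial(C)$, $k\in\ker\partial$ one has $(r+x)\cdot k=r\cdot k+x\cdot k=r\cdot k$, so the rule $(r+\partial(C))\cdot k:=r\cdot k$ is well defined and manifestly satisfies the module axioms since they already hold over $R$. This produces the promised $R/\partial(C)$-module structure on $\ker\partial$, completing the proof. $\Box$
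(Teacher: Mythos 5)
Your proof is correct, and the paper itself offers no argument for this lemma (it is stated with the proof left to the reader), so there is nothing to diverge from: the routine checks plus the double use of the Peiffer identity ($kc=\partial(k)\cdot c=0$, hence by commutativity $\partial(c)\cdot k=ck=kc=0$) is exactly the intended standard argument.
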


As these two examples suggest, general crossed modules lie between
the two extremes of ideal and modules. Both aspects are
important.

(iii) In the category of algebras, the appropriate replacement for
automorphism \ groups is the bimultiplication algebra Bim$\left( R\right) $
defined by Mac Lane \cite{[m]}. (see also \cite{[dl],[lue]}). Let $R$ be an
associative (not necessarily unitary or commutative) $k$-algebra. Bim$(R)$
consists of pairs $(\gamma ,\delta )$ of $R$-linear mappings from $R$ to $R$
such that
\begin{equation*}
\gamma (rr^{\prime })=\gamma (r)\cdot r^{\prime }
\end{equation*}%
\begin{equation*}
\delta (rr^{\prime })=r\cdot \delta \left( r^{\prime }\right)
\end{equation*}%
and
\begin{equation*}
r\cdot \gamma \left( r^{\prime }\right) =\delta (r)\cdot r^{\prime }.
\end{equation*}

If $R$ is a commutative algebra and $Ann\left( R\right) =0$ or $R^{2}=R,$
then, since%
\begin{equation*}
\begin{array}{lll}
x\cdot \delta (r) & = & \delta (r)\cdot x=r\cdot \gamma (x)=\gamma (x)\cdot r
\\
& = & \gamma (xr)=\gamma (rx)=\gamma (r)\cdot x=x\cdot \gamma (r)%
\end{array}%
\end{equation*}%
for every $x\in R,$ we get $\gamma =\delta .$ Thus Bim$(R)$ may be
identified with the $k$-algebra $\mathcal{M}(R)$ of multipliers of $R.$
Recall that a multiplier of $R$ is a linear mapping $\lambda
:R\longrightarrow R$ such that for all $r,r^{\prime }\in R$%
\begin{equation*}
\lambda (rr^{\prime })=\lambda (r)r^{\prime }.
\end{equation*}%
Also $\mathcal{M}(R)$ is commutative as%
\begin{equation*}
\lambda ^{\prime }\lambda (xr)=\lambda ^{\prime }(\lambda (x)r)=\lambda
(x)\lambda ^{\prime }(r)=\lambda ^{\prime }(r)\lambda (x)=\lambda \lambda
^{\prime }(rx)=\lambda \lambda ^{\prime }(xr)
\end{equation*}%
for any $x\in R.$ Thus $\mathcal{M}(R)$ is the set of all multipliers $%
\lambda $ such that $\lambda \gamma =\gamma \lambda $  for every multiplier $%
\gamma .$

So automorphism crosed module corresponds to the multiplication crossed
module $\left( R,M\left( R\right) ,\mu \right) $ where $\mu :R\rightarrow
M\left( R\right) $ is defined by $\mu \left( r\right) =\lambda _{r}$ with $%
\lambda _{r}\left( r^{\prime }\right) =rr^{\prime }$ for all $r,r^{\prime
}\in R$ and the action is given by $\lambda \cdot r=\lambda \left( r\right) $
(See [2] for details).
\subsection{Free Crossed Modules}\label{s5}

Let $\left( C,R,\partial \right) $ be a crossed module, let $Y$ be
a set and let $\upsilon :Y\rightarrow C$ be a function, then
$\left( C,R,\partial \right) $ is said to be a free crossed module
with basis $\upsilon $ or alternatively, on the function $\partial
\upsilon :Y\rightarrow R$ if for
any crossed $R$-module $\left( A,R,\delta \right) $ and a function $%
w:Y\rightarrow A$ such that $\delta w=\partial \upsilon $, there
is a unique morphism
\[
\phi :\left( C,R,\partial \right) \longrightarrow \left(
A,R,\delta \right)
\]
such that the diagram

$$
\xymatrix@R=20pt@C=20pt{
  C \ar[rr]^{\phi} \ar[dr]_{\partial}
                &  &    A \ar[dl]^{\delta}    \\
                & R                 }$$
is commutative.

For our purpose, an important standard construction of free crossed $R$%
-modules is as follows:

Suppose given $f:Y\rightarrow R$. Let $E=R^{+}[Y],$ the positively
graded part of the polynomial ring on $Y.$ $f$ induces a morphism
of $R$-algebras,
\[
\theta :E\rightarrow R
\]
defined on generators by
\[
\theta \left( y\right) =f\left( y\right) .
\]
We define an ideal $P$ in $E$ (sometimes called by analogy with
the group theoretical case, the Peiffer ideal relative to $f$)
generated by the elements
\[
\left\{ pq-\theta \left( p\right) q:\ p,q\in E\right\}
\]
clearly $\theta \left( P\right) =0,$ so putting $C=E/P$, one
obtains an induced morphism
\[
\delta :C\rightarrow R
\]
which is the required free crossed $R$-module on $f$ (cf. \cite{[p2]}).

This construction will be seen later as a special case of an
induced crossed module.
\section{Fibrations and Cofibrations Categories}\label{s1}
The notion of fibration of categories is intended to give a general
background to constructions analogous to pullback by a morphism. It
seems to be a very useful notion for dealing with hierarchical
structures. A functor which forgets the top level of structure is
often usefully seen as a fibration or cofibration of categories.

We recall from \cite{[bs]} the definition of fibration and cofibration of categories.
\begin{definition} Let $ \Phi :\mathbf{X}\to \mathbf{B} $ be a functor. A morphism $ \varphi :Y\to
X $ in $ \mathbf{X} $ over $ u := \Phi (\varphi) $ is called
cartesian if and only if for all $ v: K\to J $ in $ \mathbf{B} $ and
$ \theta:Z\to X $ with $ \Phi(\theta)=uv $ there is a unique
morphism $ \psi: Z\to Y $ with $ \Phi(\psi)=v $ and $
\theta=\varphi\psi . $

This is illustrated by the following diagram:
$$
\xymatrix{
& Z \ar@/^1pc/[rr]^{\theta}\ar@{.>}[r]_{\psi}  & Y \ar[r]_{\varphi} & X & &\ar[d]_{\Phi}\\
& K \ar@/_1pc/[rr]_{uv}\ar[r]_{v}  & J \ar[r]_{u} & I & & }
$$
It is straightforward to check that cartesian morphisms are closed
under composition, and that $ \varphi $ is an isomorphism if and
only if $ \varphi $ is a cartesian morphism over an isomorphism.

A morphism $ \alpha :Z\to Y  $ is called vertical (with respect to $
\Phi $) if and only if $ \Phi(\alpha) $  is an identity morphism in
$ \mathbf{B}. $ In particular, for $ I\in \mathbf{B}$ we write
$\mathbf{X}$/I , called the fibre over $ I $, for the subcategory of
$ \mathbf{X} $ consisting of those morphisms $ \alpha $ with $
\Phi(\alpha)=id_I . $
\end{definition}

\begin{definition}
The functor $ \Phi :\mathbf{X}\to \mathbf{B} $ is a fibration or
category fibred over $ \mathbf{B} $ if and only if for all $ u:
J\to I $ in $ \mathbf{B} $ and $ X\in\mathbf{X}$/I there is a
cartesian morphism $\varphi: Y\to X $ over $ u: $ such a $ \varphi $ is
called a cartesian lifting of  $ X $ along $ u. $
\end{definition}

In other words, in a category fibred over $\mathbf{B}$, $ \Phi :\mathbf{X}\to \mathbf{B} $, we can pull back objects of $\mathbf{X}$ along any arrow of $\mathbf{B}$.

\begin{definition} Let $ \Phi :\mathbf{X}\to \mathbf{B} $ be a functor. A morphism $ \varphi :Z\to
Y $ in $ \mathbf{X} $ over $ v := \Phi (\psi) $ is called
cocartesian if and only if for all $ u: J\to I $ in $ \mathbf{B} $
and $ \theta:Z\to X $ with $ \Phi(\theta)=uv $ there is a unique
morphism $ \varphi: Y\to X $ with $ \Phi(\varphi)=u $ and $
\theta=\varphi\psi . $ This is illustrated by the following diagram:
$$
\xymatrix{
& Z \ar@/^1pc/[rr]^{\theta}\ar[r]_{\psi}  & Y \ar@{.>}[r]_{\varphi} & X & &\ar[d]_{\Phi}\\
& K \ar@/_1pc/[rr]_{uv}\ar[r]_{v}  & J \ar[r]_{u} & I & & }
$$
It is straightforward to check that cocartesian morphisms are closed
under composition, and that $ \psi $ is an isomorphism if and only
if $ \psi $ is a cocartesian morphism over an isomorphism.

The functor $ \Phi :\mathbf{X}\to \mathbf{B} $ is a cofibration or
category cofibred over $ \mathbf{B} $ if and only if for all $ v:
K\to J $ in $ \mathbf{B} $ and $ Z\in\mathbf{X}$/K there is a
cartesian morphism $ \psi: Z\to Z' $ over $ v: $ such a $ \psi $ is
called a cocartesian lifting of  $ Z $ along $ v. $

The cocartesian liftings of $ Z\in\mathbf{X}$/K along $ v: K\to J $
are also unique up to vertical isomorphism.
\end{definition}

It is interesting to get a characterisation of the cofibration property for a functor that already is a fibration. The following is a useful weakening of the condition for cocartesian in the case of a fibration of categories.
\begin{proposition}
Let $ \Phi :\mathbf{X}\to \mathbf{B} $ be a fibration of categories. Then $ \psi:Z\to Y $ in $\mathbf{X}$ over $ v:K\to J $ in $\mathbf{B}$ is cocartesian if only if for all $\theta ^{\prime }:Z\rightarrow X^{\prime }$ over $v$ there is a unique
morphism $\psi ^{\prime }:Y\rightarrow X^{\prime }$ in \textbf{X}/J with
$\theta ^{\prime }=\psi ^{\prime }\psi .$
\end{proposition}
The following Proposition allows us to prove that a fibration is also a
cofibration by constructing the adjoints $u_{\ast }$ of $u^{\ast \text{ }}$%
for every $u.$
\begin{proposition}
Let $\Phi :\mathbf{X}\rightarrow \mathbf{B}$ be a fibration of categories.
Let $u:J\rightarrow I$ have reindexing functor $u^{\ast \text{ }%
}:X/I\rightarrow X/J.$ Then the following are equivalent:

(i) For all $Y\in X/J,$ there is a morphism $u_{Y}:Y\rightarrow u_{\ast }Y$
which is cocartesian over $u$;

(ii) there is a functor $u_{\ast }:X/J\rightarrow X/I$ which is left
adjoint to $u^{\ast}.$
\end{proposition}
\section{Pullback Crossed Modules}\label{s3}

Within the theory of modules and more generally of Abelian
categories, there is \ a very important set of results known as
Morita Theory describing between categories of modules. The idea
is that let $\phi :S\rightarrow R$
be a ring homomorphism and let $M$ be a $R$-module, then we can obtain $S$%
-module $\phi ^{\ast }\left( M\right) $ by means of $\phi $ for
which the action is given by $s\cdot m=\phi (s)m,$ for $s\in
S,m\in M.$ Then there is a functor
\[
\phi ^{\ast }:{\bf Mod}/R\longrightarrow {\bf Mod}/S\,.
\]
This functor has a left adjoint
\[
\phi _{\ast }:{\bf Mod}/S\longrightarrow {\bf Mod}/R\,.
\]
Then each $S$-module $N$ defines a $R$-module $\phi _{\ast
}(N)=R\otimes _{S}N.$ This construction is also known as a
``change of base''\ in a module theory. In this section we will
see the corresponding idea with crossed modules. We call these
structures the pullback crossed module and the induced crossed module,
respectively. These functors had already been done by Porter,
\cite{[p1]}, under different names. Also Shammu \cite{[s]}, had
considered in his thesis for non-commutative case. But we will
deeply analyse these constructions by using the work of
Brown-Wensley and Brown, Higgins and Sivera \cite{[bhs],[bw1],[bw2]}. Similar results are known for crossed
modules of groups \cite{[bh],[bw1]}, and Lie algebras \cite{[cl]}.

We will define a pullback crossed module which is due to
\cite{[bhs]}.
\begin{definition}
Given a crossed module $\partial :C\rightarrow R$ and a morphism of $k$%
-algebras $\phi :S\rightarrow R,$ the {\it pullback crossed
module} can be given by

(i) a crossed $S$-module $\phi ^{\ast }\left( C,R,\partial \right)
=(\partial ^{\ast }:\phi ^{\ast }(C)\rightarrow S)$

(ii) given
\[
\left( f,\phi \right) :\left( B,S,\mu \right) \longrightarrow
\left( C,R,\partial \right)
\]
crossed module morphism, then there is a unique $\left( f^{\ast
},id_{S}\right) $ crossed $S$-module morphism that commutes the
following diagram:

$$\xymatrix@R=40pt@C=40pt{
                &   (B,S,\mu)\ar@{.>}[dl]_{(f^{\ast},id_{S})} \ar[d]^{(f,\phi)}     \\
  (\phi^{\ast}(C),S,\partial^{\ast})  \ar[r]_{(\phi',\phi)} & (C,R,\partial)             }
$$
or more simply as
$$\xymatrix@R=20pt@C=20pt{
  B \ar[dd]_{\mu} \ar[rr]^{f}\ar@{.>}[dr]_{f^{\ast}}
              &  & C \ar[dd]^{\partial}  \\
&\phi^{\ast}(C)\ar[ur]_{\phi'}\ar[dl]^{\partial^{\ast}}&
 \\ S  \ar[rr]_{\phi}
               & & R             }$$
\end{definition}

\subsection{Construction of Pullback Crossed Module}\label{s2}

Let $(C,R,\partial )$ be a crossed $R$-module and let $\phi
:S\longrightarrow R$ be a morphism of $k$-algebras. Then $A=\{(c,s)\mid \phi
\left( s\right) =\partial \left( c\right) ,\ s\in S,c\in C\}$ has the
structure of a $S$-algebra by

\[
s\cdot {(c,s^{\prime })}=(\phi \left( s\right) \cdot c,ss^{\prime
}).
\]
If we take $\phi ^{\ast }\left( C\right) =A,$ then $\phi ^{\ast
}\left( C,R,\partial \right) =(\phi ^{\ast }\left( C),S,\partial
^{\ast }\right) $ is a pullback crossed module. We now show this
as follows:

i) $\partial ^{\ast }:\phi ^{\ast }\left( C\right) \rightarrow S,$
$\partial ^{\ast }((c,s))=s$ is a crossed $S$-module. Since,

\[
\begin{array}{llll}
&  & \partial ^{\ast }\left( c,s\right) \cdot \left( c^{\prime
},s^{\prime
}\right) & =s\cdot \left( c^{\prime },s^{\prime }\right) \\
&  &  & =\left( \phi \left( s\right) \cdot c^{\prime },ss^{\prime
}\right)
\\
&  &  & =\left( \partial \left( c\right) \cdot c^{\prime
},ss^{\prime
}\right) \\
&  &  & =\left( cc^{\prime },ss^{\prime }\right) \\
&  &  & =\left( c,s\right) \left( c^{\prime },s^{\prime }\right)
\end{array}
\]
ii)
\[
\left( \phi ^{\prime },\phi \right) :(\phi ^{\ast }\left(
C),S,\partial ^{\ast }\right) \longrightarrow \left( C,R,\partial
\right)
\]
is a morphism of crossed module where $\phi ^{\prime }\left(
c,s\right) =c.$ Since
\[
\begin{array}{lll}
\phi ^{\prime }\left( s^{\prime }\cdot \left( c,s\right) \right) &
= & \phi ^{\prime }\left( \phi \left( s^{\prime }\right) \cdot
c,s^{\prime }s\right)
\\
& = & \phi \left( s^{\prime }\right) \cdot c \\
& = & \phi \left( s^{\prime }\right) \cdot \phi ^{\prime }\left(
c,s\right)
\end{array}
\]
and clearly $\partial \phi ^{\prime }=\phi \partial ^{\ast }.$

Suppose that
\[
\left( f,\phi \right) :\left( B,S,\mu \right) \longrightarrow
\left( C,R,\partial \right)
\]
is any crossed module morphism such that $\partial f=\phi \mu ,$
then there is a unique morphism
\[
\begin{array}{cccc}
f^{\ast }: & B & \longrightarrow & \phi ^{\ast }\left( C\right) \\
& x & \longmapsto & \left( f\left( x\right) ,\mu \left( x\right)
\right)
\end{array}
\]
since $\partial f\left( x\right) =\phi\mu \left( x\right) $ for all $%
x\in B .$\ Now, let us show that
$\left( f^{\ast },id_{S}\right)$ is a crossed $S$-module
morphism. For \ $x\in B  ,\ s\in S$%
\[
\begin{array}{lll}
f^{\ast }\left( s\cdot x\right) & = & \left( f\left( s\cdot
x\right) ,\mu
\left( s\cdot x\right) \right) \\
& = & \left( \phi \left( s\right) \cdot f\left( x\right) ,s\mu
\left(
x\right) \right) \\
& = & s\cdot \left( f\left( x\right) ,\mu \left( x\right) \right) \\
& = & s\cdot f^{\ast }\left( x\right) \\
& = & id_{S}(s)\cdot f^{\ast }\left( x\right),
\end{array}
\]
so $\left( f^{\ast },id_{S}\right) $ is a crossed $S$-module
morphism. \

Also, it is clear that  $\partial ^{\ast }f^{\ast }=\mu $ and
$\phi'f^{\ast }=f.$
Thus, we get a functor
\[
\phi ^{\ast }:{\bf XMod}/R\longrightarrow {\bf XMod}/S
\]
which gives our pullback crossed module.

This pullback crossed module can be given by a pullback diagram.

\begin{corollary}
Given a crossed module $\left( C,R,\partial \right) $ and a
morphism $\phi
:S\rightarrow R$ of $k$-algebras, there is a pullback diagram %
$$ \xymatrix@R=40pt@C=40pt{
  \phi^{*}(C) \ar[d]_{\partial^{\ast}} \ar[r]^{}
                & C \ar[d]^{\partial}  \\
  S  \ar[r]_{\phi}
                & R             } $$
\end{corollary}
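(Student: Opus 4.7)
The plan is to read the corollary as the statement that the commutative square exhibits $\phi^{\ast}(C)$ as the pullback of $\partial:C\to R$ and $\phi:S\to R$ in the category of $k$-algebras, and then verify the required universal property by invoking the construction that has just been carried out.

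First I would observe that the square commutes essentially by the definition of $\phi^{\ast}(C)$: every element is a pair $(c,s)$ with $\phi(s)=\partial(c)$, and the two projections are exactly $\phi'(c,s)=c$ and $\partial^{\ast}(c,s)=s$, so $\partial\phi'(c,s)=\partial(c)=\phi(s)=\phi\partial^{\ast}(c,s)$. Both maps are $k$-algebra morphisms because $\phi^{\ast}(C)$ was shown above to be a $k$-subalgebra (indeed an $S$-subalgebra) of $C\times S$.

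Next I would verify the universal property. Suppose $B$ is any $k$-algebra equipped with $k$-algebra morphisms $g:B\to C$ and $h:B\to S$ satisfying $\partial g=\phi h$. Define
\[
g^{\ast}:B\longrightarrow \phi^{\ast}(C),\qquad g^{\ast}(x)=(g(x),h(x)).
\]
The condition $\partial g=\phi h$ is precisely what guarantees $g^{\ast}$ lands in $\phi^{\ast}(C)$, and it is a $k$-algebra morphism since $g$ and $h$ are. By construction $\phi'g^{\ast}=g$ and $\partial^{\ast}g^{\ast}=h$. For uniqueness, any morphism $u:B\to \phi^{\ast}(C)$ with $\phi'u=g$ and $\partial^{\ast}u=h$ must satisfy $u(x)=(g(x),h(x))=g^{\ast}(x)$, because the inclusion $\phi^{\ast}(C)\hookrightarrow C\times S$ is monic and an element of $C\times S$ is determined by its two components. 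This is the same argument that was already produced during the construction of $\phi^{\ast}$, applied here in the ambient category of $k$-algebras rather than crossed modules.

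There is really no serious obstacle: the construction was arranged precisely so that $\phi^{\ast}(C)$ is the set-theoretic (and algebra-theoretic) pullback, and the previous subsection has already verified compatibility with the algebra structures and the relevant universal morphism $f^{\ast}$. The only mildly delicate point is keeping clear which category the pullback lives in; since the statement only asserts a pullback diagram (with no crossed-module data attached to the corners $C$, $S$, $R$ as algebras), the natural reading is a pullback of $k$-algebras, and the verification above suffices.
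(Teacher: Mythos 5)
Your proposal is correct and is precisely the ``direct calculation'' the paper alludes to but does not write out: $\phi^{\ast}(C)=\{(c,s)\mid\phi(s)=\partial(c)\}$ is by construction the fibre product of $C$ and $S$ over $R$, and your verification of commutativity and of the universal property (via $g^{\ast}(x)=(g(x),h(x))$ and uniqueness from the two projections) is the same argument already rehearsed in the construction of $f^{\ast}$ in the preceding subsection. No gaps; you have simply supplied the details the paper omits.
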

\begin{proof}
It is straight forward from a direct calculation.
\end{proof}

We apply the result given in section \ref{s1} to fibred categories
for particular case.

We have a forgetful functor $p:\mathbf{XMod}\rightarrow $ $k$%
\textbf{-Alg }which sends $(C,R,\partial )\mapsto R.$ So, we get that:

\begin{proposition} The forgetful functor $p:\mathbf{XMod}\rightarrow $ $k$\textbf{-Alg}
is fibred and has a left adjoint.
\end{proposition}
\begin{proof}
The left adjoint of $p$ assigns to an algebra $S$ the crossed module $%
0\rightarrow S.$

Next, we give the pullback construction to prove that $p$ is fibred.
So let $\phi :S\rightarrow R$ be a morphism of algebras, and let
$\partial :C\rightarrow R$ be a crossed module. We define $\ A=\phi
^{\ast }\left( C\right) $ as in subsection \ref{s2}. So, $\partial
^{\ast }: A \rightarrow S$ is a crossed module and the morphism
$\phi ^{\prime}: A \rightarrow C $ given by $\phi ^{\prime }\left(
c,s\right) =c$ is cartesian.
\end{proof}

The above result in the cases of crossed modules of groups and groupoids appeared in
\cite{[bh]} and \cite{[bs]}, respectively.

\subsection{Examples of Pullback Crossed Modules}

{\bf 1.} Given crossed module $i=\partial :I\hookrightarrow R$ \
where $i$ is an inclusion of an ideal. The pullback crossed module
is
\[
\begin{array}{lllll}
\phi ^{\ast }\left( I,R,\partial \right) & = & \left( \phi ^{\ast
}\left(
I\right) ,S,\partial ^{\ast }\right) &  &  \\
& \cong & \left( \phi ^{-1}\left( I\right) ,S,\partial ^{\ast
}\right) &  &
\end{array}
\]
\ as,
\[
\begin{array}{lll}
\phi ^{\ast }\left( I\right) & = & \left\{ \left( i,s\right) \ |\
\phi
\left( s\right) =\partial \left( i\right) =i,\ s\in S,\ i\in I\right\} \\
& \cong & \left\{ s\in S\ |\ \phi \left( s\right) =i\in I\right\}
=\phi ^{-1}\left( I\right) \trianglelefteq S.
\end{array}
\]
The pullback diagram is
$$ \xymatrix@R=40pt@C=40pt{
  \phi^{-1}(I) \ar[d]_{\partial^{\ast}} \ar[r]^{}
                & I \ar[d]^{i}  \\
  S  \ar[r]_{\phi}
                & R             } $$
Particularly if $I=\left\{ 0\right\} ,$ then
\[
\phi ^{\ast }\left( \left\{ 0\right\} \right) \cong \left\{ s\in
S\ |\ \phi \left( s\right) =0\right\} =\text{Ker}\phi
\]
and so $\left( Ker\phi ,S,\partial ^{\ast }\right) $ is a pullback
crossed modules. Kernels are thus particular cases of
pullbacks.\\
Also if $ \phi$ is onto and $ I=R, $ then $ \phi ^{\ast
}(R)=R \times S $ $\bigskip $

{\bf 2.} Given a crossed module $0:M\longrightarrow R,$ $0(m)=0$ where $M\ $%
\ is any $R$-module, so it is also an $R$-algebra with zero
multiplication. Then
\[
\phi ^{\ast }\left( M,R,0\right) =\left( \phi ^{\ast }\left(
M\right) ,S,\partial ^{\ast }\right)
\]
where
\[
\begin{array}{lllll}
\phi ^{\ast }\left( M\right) & = & \left\{ \left( m,s\right) \in
M\times S\
|\ \phi \left( s\right) =\partial \left( m\right) =0\right\} &  &  \\
& = & \left\{ \left( m,s\right) \ |\ \phi \left( s\right) =0,\
s\in S\right\}
&  &  \\
& \cong & M\times Ker\phi &  &
\end{array}
\]
\ The corresponding pullback diagram is
$$ \xymatrix@R=40pt@C=40pt{
  M\times Ker\phi \ar[d]_{\partial^{\ast}} \ar[r]^{}
                & M \ar[d]^{0}  \\
  S  \ar[r]_{\phi}
                & R.             } $$
So if $\phi $ is injective $\left( \text{Ker}\phi =0\right) ,$ then
$M\cong \phi ^{\ast }\left( M\right) $. If $M=\left\{ 0\right\} ,$
then $\phi ^{\ast }\left( M\right) \cong $ Ker$\phi $.

{\bf 3.} If $\phi :S\rightarrow R$ is a morphism of algebras, then
there
may, or may not be a morphism $M(\phi ):M(S)\rightarrow M(R)$ such that
$$ \xymatrix@R=40pt@C=40pt{
  S \ar[d]_{} \ar[r]^{\phi}
                & R \ar[d]^{}  \\
  M(S)  \ar[r]_{M(\phi)}
                & M(R)            } $$
is a morphism of crossed modules. Using the following commutative
diagram
$$\xymatrix@R=40pt@C=40pt{  \phi^{\ast}(R) \ar[r]^{}\ar[d]_{{\partial}^{\ast}} & R\ar[d]_{} \\
S\ar[d]^{ }
 \ar[r]^{ } & M(R) \\ M(S)\ar[ur]_{M(\phi)}  & }$$
we get the pullback diagram
$$ \xymatrix@R=40pt@C=40pt{
  \phi^{\ast}(R) \ar[d]_{} \ar[r]^{}
                & R \ar[d]^{}  \\
  M(S)  \ar[r]_{M(\phi)}
                & M(R)            } $$
where
\[
\begin{array}{lll}
\phi ^{\ast }\left( R\right)  & = & \left\{ \left( \gamma
,r\right) :M(\phi )\left( \gamma \right) =\partial \left( r\right)
,\ \gamma \in M(S),\ r\in R\right\} .
\end{array}
\]

\section{Induced Crossed Modules}

We will consider a functor $\phi _{\ast }:{\bf XMod}/S\longrightarrow {\bf %
XMod}/R$ left adjoint defined to the pullback $\phi ^{\ast }$ of
the previous section. This functor has already been defined by
Porter \cite{[p1]} for which he call it ``extension along a
morphism''. But we defined this functor by the universal property
and analysed this construction deeply.

\begin{definition}\label{d1}
For any crossed $S$-module $\partial :D\rightarrow S$ and
$k$-algebra morphism $\phi :S\rightarrow R,$ the {\it induced
crossed module} can be given by

i) a crossed $R$-module $\phi _{\ast }\left( D,S,\partial \right)
=\left(
\partial _{\ast }:\phi _{\ast }(D)\rightarrow R\right) $

ii) Given
\[
\left( f,\phi \right) :\left( D,S,\partial \right) \longrightarrow
\left( B,R,\eta \right)
\]
crossed module morphism, then there is a unique $\left( f_{\ast
},id_{R}\right) $ crossed $R$-module morphism such that commutes
the following diagram
$$
\xymatrix@R=40pt@C=40pt{
  (D,S,\partial) \ar[d]_{(f,\phi)} \ar[dr]^{(\phi',\phi)}   \\
  (B,R,\eta) \ar@{<.}[r]_{(f_{\ast},id_{R})}  & (\phi_{\ast}(D),R,\partial_{\ast})              }
$$
or more simply as
$$\xymatrix@R=20pt@C=20pt{
  D \ar[dd]_{\partial} \ar[rr]^{f}\ar[dr]_{\phi'}
              &  & B \ar[dd]^{\eta}  \\
&\phi_{\ast}(D)\ar@{.>}[ur]|-{f_{\ast}}\ar[dr]_{\partial_{\ast}} &
 \\ S  \ar[rr]_{\phi}
               & & R             }$$
\end{definition}

\subsection{Construction of Induced Crossed Module}\label{s4}

We will construct the induced crossed module as follows. Given a $k$%
-algebra morphism $\phi :S\rightarrow R$ and a crossed module
$\partial :D\longrightarrow S,$ and let the set
\[
F(D\times R)
\]
be a free algebra generated by the elements of $D\times R$. Let
$P$ be the ideal generated by all the relations of the three
following types:
\begin{eqnarray*}
&&(d_{1},r)+(d_{2},r)=(d_{1}+d_{2},r)\\
&&(s\cdot d,r)=(d,\phi (s)r)\\
&&(d_{1},r_{1})(d_{2},r_{2})=(d_{2},r_{1}(\phi \partial d_{1})r_{2})
\end{eqnarray*}
for any $d,d_{1},d_{2}\in D,\ $and $r\in R,\ s\in S$

We define
\[
D\otimes _{S}R=F(D\times R)/P.
\]
This is an $R$-algebra with
\[
r^{\prime }\cdot (d\otimes r)=d\otimes r^{\prime }r
\]
for $d\in D,\ r,\ r^{\prime }\in R.$ If we take $\phi _{\ast
}(D)=D\otimes _{S}R$, then
\[
\phi _{\ast }(D,S,\partial )=\left( \phi _{\ast }(D),R,\partial
_{\ast }\right)
\]
is a induced crossed module. We will see it as follows:

i)
\[
\begin{array}{cccc}
\partial _{\ast }: & D\otimes _{S}R & \longrightarrow & R \\
& d\otimes r & \longmapsto & \phi \partial \left( d\right) r
\end{array}
\]
\begin{eqnarray*}
{}\partial _{\ast }({d\otimes r})\cdot {(d}_{1}\otimes r_{1}{)}
&=&\left(
\left( \phi \partial d\right) r\right) \cdot (d_{1}\otimes r_{1}) \\
&=&(d_{1}\otimes \phi (\partial d)rr_{1}) \\
&=&(\partial d\cdot d_{1}\otimes rr_{1}) \\
&=&(dd_{1}\otimes rr_{1}) \\
&=&(d\otimes r)(d_{1}\otimes r_{1})
\end{eqnarray*}
so $\partial _{\ast }$ is a crossed $R$-module.

ii)Since $R$ has a\ unit, $\phi ^{\prime }:D\rightarrow \phi _{\ast
}\left( D\right) $ is defined by $\phi ^{\prime }(d)=(d\otimes 1),$
then
\[
\begin{array}{lllll}
\phi ^{\prime }(s\cdot d) & = & (s\cdot d\otimes 1) &  &  \\
& = & (d\otimes \phi \left( s\right) ) &  &  \\
& = & \phi \left( s\right) \cdot (d\otimes 1) &  &  \\
& = & \phi (s)\cdot \phi ^{\prime }(d) &  &
\end{array}
\]
for $d\in D,$ $s\in S.$ So $(\phi ^{\prime },\phi ):(D,S,\partial
)\rightarrow (\phi _{\ast }\left( D\right) ,R,\partial _{\ast })$
is a crossed module morphism.

Let
\[
\left( f,\phi \right) :\left( D,S,\partial \right) \longrightarrow
\left( B,R,\eta \right)
\]
be any crossed module morphism. Then there is a morphism $f_{\ast
}$ given by
\[
\begin{array}{cccc}
f_{\ast }: & \phi _{\ast }\left( D\right) & \longrightarrow & B \\
& d\otimes r & \longmapsto & r\cdot f(d)
\end{array}
\]
\ Also, for\ $x\in \phi _{\ast }\left( D\right) ,\ r\in R$
\[
\begin{array}{llll}
f_{\ast }\left( r\cdot (d_{1}\otimes r_{1})\right) & = & f_{\ast
}(d_{1}\otimes rr_{1}) &  \\
& = & rr_{1}\cdot f(d_{1}) &  \\
& = & r\cdot (r_{1}\cdot f(d_{1})) &  \\
& = & r\cdot f_{\ast }(d_{1}\otimes r_{1}) &  \\
& = & id_{R}(r)\cdot f_{\ast }(d_{1}\otimes r_{1}) &
\end{array}
\]
so $\left( f_{\ast },id_{R}\right) $ is a crossed $R$-module
morphism. \

Finally,
\[
\begin{array}{lll}
\left( \eta f_{\ast }\right) \left( (d\otimes r)\right) & = & \eta
\left(
f_{\ast }(d\otimes r)\right) \\
& = & \eta \left( r\cdot f(d)\right) \\
& = & r\eta \left( f(d)\right) \\
& = & r\phi \left( \partial (d)\right) \\
& = & \partial _{\ast }\left( d\otimes r\right) \\
& = & id_{R}\partial _{\ast }(d\otimes r)
\end{array}
\]
for each $(d\otimes r)\in \phi _{\ast }\left( D\right) $ and also
$f_{\ast }\phi ^{\prime }=f$. Thus, we get a functor
\[
\phi _{\ast }:{\bf XMod}/S\longrightarrow {\bf XMod}/R
\]
which gives our induced crossed module.

The induced crossed module can be explain in terms of pushout
diagram.

\begin{corollary}
Let $\partial :D\rightarrow S$ be a crossed $S$-module and $\phi
:S\rightarrow R,$ $k$-algebra morphism. Then there is an induced diagram %

$$\xymatrix@R=40pt@C=40pt{
  D \ar[d]_{\partial} \ar[r]^{\phi '}
                & \phi_{\ast}(D) \ar[d]^{\partial_{\ast}} \\
  S \ar[r]_{\phi}
                & R.             }$$
$\Box $
\end{corollary}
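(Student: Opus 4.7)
The plan is to verify that the square is a pushout in the category of crossed modules of commutative $k$-algebras, which amounts to little more than repackaging the universal property proved during the construction of $\phi_{\ast}(D)$. First I would check commutativity: for each $d\in D$,
\[
\partial_{\ast}\phi'(d) \;=\; \partial_{\ast}(d\otimes 1) \;=\; \phi(\partial d)\cdot 1 \;=\; \phi\partial(d),
\]
so $\partial_{\ast}\phi'=\phi\partial$, as required.

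For the universal property, given any crossed $R$-module $(B,R,\eta)$ and any crossed module morphism $(f,\phi):(D,S,\partial)\to(B,R,\eta)$ --- equivalently, an outer commutative square with bottom-right corner $(B,R,\eta)$ --- the construction just above produced the morphism $f_{\ast}:\phi_{\ast}(D)\to B$, $d\otimes r\mapsto r\cdot f(d)$, and established both $\eta f_{\ast}=\partial_{\ast}$ and $f_{\ast}\phi'=f$. This is precisely the factorization demanded by a pushout.

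The only ingredient not made explicit in the preceding construction is \emph{uniqueness} of $f_{\ast}$. I would handle this by observing that the relations in $P$, together with the induced $R$-action $r'\cdot(d\otimes r)=d\otimes r'r$, force $d\otimes r = r\cdot(d\otimes 1)$ in $\phi_{\ast}(D)$. Hence $\phi'(D)$ generates $\phi_{\ast}(D)$ as an $R$-module, and any $R$-equivariant $g:\phi_{\ast}(D)\to B$ with $g\phi'=f$ is pinned down on generators by $g(d\otimes 1)=f(d)$, forcing $g(d\otimes r)=r\cdot f(d)=f_{\ast}(d\otimes r)$.

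I do not expect a genuine obstacle: the argument is essentially bookkeeping, clarifying that the universal property of $D\otimes_{S}R$ coincides with the universal property of a pushout in the appropriate ambient category of crossed modules. The mildly delicate point is only stating precisely which category the pushout is taken in, since the diagram mixes objects from ${\bf XMod}/S$ and ${\bf XMod}/R$ with their underlying base algebras, but this is already implicit in the definition of induced crossed module.
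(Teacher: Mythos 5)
Your proof is correct and follows essentially the same route as the paper, which offers no argument at all for this corollary and simply treats it as a restatement of the universal property verified in the construction of $\phi_{\ast}(D)=D\otimes_{S}R$ (commutativity $\partial_{\ast}\phi'=\phi\partial$, existence of $f_{\ast}$ with $\eta f_{\ast}=\partial_{\ast}$ and $f_{\ast}\phi'=f$). The one genuine addition you make is the uniqueness argument via $d\otimes r=r\cdot(d\otimes1)$, showing $\phi'(D)$ generates $\phi_{\ast}(D)$ as an $R$-module; the paper never proves uniqueness of $f_{\ast}$, so this is a worthwhile (and correct) supplement rather than a deviation.
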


If we consider the forgetful functor  $p:\mathbf{XMod}\rightarrow $
$k$\textbf{-Alg} mentioned in section \ref{s3}, then we get
following result.
\begin{proposition}
The forgetful functor $p:\mathbf{XMod}\rightarrow $ $k$\textbf{-Alg}
is cofibred.
\end{proposition}
\begin{proof}
let $\phi :S\rightarrow R$ be a morphism of algebras, and let
$\partial :D\rightarrow S$ be a crossed module. The construction of
$\phi _{\ast }\left( D\right)$ and of  $\partial _{\ast }: \phi
_{\ast }\left( D\right) \rightarrow R$ is as in subsection \ref{s4}.
We get a crossed module morphism $(\phi ^{\prime },\phi
):(D,S,\partial)\rightarrow (\phi _{\ast }\left( D\right)
,R,\partial _{\ast })$ which is cocartesian.
\end{proof}
\begin{remark}
\cite{[s],[p1]} For any $k$-algebra morphism $\phi :S\rightarrow R,$
there is an adjoint functor pair $\left( \phi ^{\ast },\phi _{\ast
}\right).$ Thus the category $\mathbf{XMod}$ is bifibred over
$k$\textbf{-Alg} by the forgetful functor $
p:\mathbf{XMod}\rightarrow $ $k$\textbf{-Alg}.
\end{remark}

\subsection{Examples of Induced Crossed Modules}

{\bf 1.} $\ $Let $D=S$ and $\partial =id_{S}:S\rightarrow S$ \ be
identity crossed $S$-modules. \ The induced crossed module diagram is \ %
$$\xymatrix@R=40pt@C=40pt{
  S \ar[d]_{i=\partial} \ar[r]^{\psi}
                & \phi _{\ast }(S)\dto^{\partial_\ast} \\
  S \ar[r]_{\phi}
                & R             }$$
where $\phi_{\ast }\left( S\right)=S\otimes _{S}R.$

(Remark: $S$ has not unit, otherwise $S\otimes _{S}R\cong R)$. When we take $%
S=k^{+}[X],$ the positively graded part of the polynomial algebra
over $k$ on the set of generators, $X$ we have the induced crossed
module constructed in subsection \ref{s4}. $ \partial _{\ast
}:k^{+}[X]\otimes _{k^{+}[X]}R\rightarrow R$ \ which is the free
$R$-module on $\ f:X\longrightarrow R.$ Thus the free crossed
modules is the special case of the induced crossed modules. We will
examine this with respect to subsection \ref{s5}.

Considering the free crossed module construction given in the
first section, we have a diagram
$$\xymatrix@R=40pt@C=40pt{
  k^{+}[X]\times R \ar[d]_{} \ar[r]^{\theta} &   R     \\
  \left( k^{+}[X]\times R \right)/P=k^{+}[X]\otimes _{k^{+}[X]}R \ar[ur]_{\partial_{\ast}}                     }$$
with $ \theta(p,r)=\partial_{\ast}(p\otimes r)=\phi(p)r $ for all
$p\in k^{+}[X],\,r\in R,$  where $P$ is an ideal generated by all
the relations given in subsection \ref{s4} From
\[
\begin{array}{ccl}
\theta \left( \left( p_{1},r\right) +\left( p_{2},r\right) -\left(
p_{1}+p_{2},r\right) \right) & = & \theta \left( p_{1},r\right)
+\theta
\left( p_{2},r\right) -\theta \left( p_{1}+p_{2},r\right) \\
& = & \phi \left( p_{1}\right) r+\phi \left( p_{2}\right) r-\left(
\phi
\left( p_{1}\right) +\phi \left( p_{2}\right) \right) r \\
& = & 0
\end{array}
\]
\[
\begin{array}{ccl}
\theta \left( \left( p\cdot q,r\right) -\left( q,\phi \left(
p\right) r\right) \right) & = & \theta \left( pq,r\right) -\theta
\left( q,\phi \left(
p\right) r\right) \\
& = & \phi \left( pq\right) r+\phi \left( q\right) \phi \left( p\right) r \\
& = & 0
\end{array}
\]
\[
\begin{array}{ccl}
\theta \left( \left( p_{1},r_{1}\right) \left( p_{2},r_{2}\right)
-\left( p_{2},r_{1}\phi \partial p_{1}r_{2}\right) \right) & = &
\theta \left( p_{1},r_{1}\right) \theta \left( p_{2},r_{2}\right)
-\theta \left(
p_{2},r_{1}\phi \partial p_{1}r_{2}\right) \\
& = & \phi \left( p_{1}\right) r_{1}\phi \left( p_{2}\right)
r_{2}-\phi
\left( p_{2}\right) r_{1}\phi \partial \left( p_{1}\right) r_{2} \\
& = & 0,
\end{array}
\]
we get $\theta (P)=0,$ and have the pushout diagram
$$
\xymatrix@R=40pt@C=40pt{&&&& A\ar[ddl]^{\delta}\\
  &X \ar@{=}[d]_{id} \ar[r]^{} & k^{+}[X]\ar[urr]^{} \ar[d]_{\partial}
  \ar[r]^{\phi'} &\phi _{\ast }\left( k^{+}[X]\right)
   \ar[d]^{\partial_{\ast}}\ar@{.>}[ur]|-{} \\
  &X \ar[r]_{i} & k^{+}[X] \ar[r]_{\phi} & R   }
$$
where $ \phi _{\ast }\left( k^{+}[X]\right) =k^{+}[X]\otimes
_{k^{+}[X]}R.$

{\bf 2.} \ Let $D\ $be $S$-module and $0=\partial :D\rightarrow S$
be zero morphism. \ The pushout diagram is\
$$\xymatrix@R=40pt@C=40pt{
  D \ar[d]_{\partial=0} \ar[r]^{\psi}
                & \phi _{\ast }(D) \ar[d]^{\partial_\ast}  \\
  S  \ar[r]_{\phi}
                & R             }
$$
where
\[
\begin{array}{lllll}
\partial _{\ast }\left( d\otimes r\right) & = & \phi \left( \partial \left(
d\right) \right) r &  &  \\
& = & \phi \left( 0\right) r &  &  \\
& = & 0r=0 &  &
\end{array}
\]
so $\partial _{\ast }=0$ and $P=0.$ Thus,
\[
\phi _{\ast }\left( D\right) =F\left( D\times R\right)
\]
Then, the induced crossed modules is a free $S$-module on $D\times
R.$

{\bf 3.} Given crossed module $i=\partial :I\hookrightarrow S$
where $i$ inclusion of an ideal. Using any surjective homomorphism
$\phi :S\rightarrow S/I$ \ the induced diagram is
$$ \xymatrix@R=40pt@C=40pt{
  I \ar[d]_{i=\partial} \ar[r]^{\psi}
                & \phi _{\ast }(I) \ar[d]^{\partial_\ast}  \\
  S \ar[r]_{\phi}
                & S/I             }$$
Thus we get$\ \phi _{\ast }\left( I\right) = I\otimes \left(
S/I\right)\cong I/I^{2}$ which is an $S/I$-module. So $\phi _{\ast
}$ does not preserve ideals.

{\bf 4. }Given crossed module $\partial :S\rightarrow M(S)$ with
$\partial
(s)=\delta _{s}$ and $\delta_{s}(s^{\prime })=ss^{\prime }$ for all $%
s,s^{\prime }\in S$. If $M(\phi ):M(S)\rightarrow M(R)$ is a morphism where $%
\phi :S\rightarrow R$ is a morphism of algebras such that $\left(
\phi ,M(\phi )\right) $ is a morphism of crossed modules. We get
the induced diagram
$$ \xymatrix@R=40pt@C=40pt{
  S \ar[d]_{\partial} \ar[r]^{}
                & \phi_{\ast}(S) \ar[d]^{\partial _{\ast}}  \\
  M(S)  \ar[r]_{M(\phi)}
                & M(R)            } $$
where $\phi _{\ast }\left( S\right) =S\otimes M(R). $
\subsection{Properties of Induced Crossed Module}

$\phi _{\ast }\left( D\right) $ induced crossed $R$-module can be
expressed more simply for the case when $\phi :S\rightarrow R,\
k$-algebra morphism, is an epimorphism or monomorphism.

\subsubsection{Epimorphism case:}

\begin{proposition}
\cite{[s]} Let $\partial :D\rightarrow S$ be a crossed $S$-module
and $\phi :S\rightarrow R$ epimorphism with Ker$\phi =K$. Then
\[
\phi _{\ast }\left( D\right) \cong D/KD
\]
where $KD$ is an ideal of $D$ generated by $\{k\cdot d\mid d\in
D,\,k\in K\}.$
\end{proposition}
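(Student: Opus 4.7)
The natural strategy is to verify directly that the quotient $D/KD$ satisfies the universal property that characterises $\phi_{*}(D)$; by uniqueness of universal constructions, this yields the desired isomorphism without having to manipulate the tensor-product construction directly.

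First I would equip $D/KD$ with a crossed $R$-module structure. The subset $KD$ is an ideal of $D$: for $k\in K$, $d,d'\in D$, the Peiffer identity gives $(k\cdot d)d'=\partial(k\cdot d)\cdot d'=(k\,\partial d)\cdot d'=k\cdot(\partial d\cdot d')\in KD$, and closure under addition is built into the definition. Since $\phi$ is surjective, I would define an $R$-action on $D/KD$ by lifting: for each $r\in R$ choose $s\in S$ with $\phi(s)=r$ and set $r\cdot[d]=[s\cdot d]$. Independence of the lift follows because two choices differ by an element of $K$, and then the corresponding $S$-action outputs differ by an element of $KD$; well-definedness on the quotient requires that $S$ preserve $KD$, which holds since $sk\in K$ for $k\in K$ (as $K$ is an ideal of $S$). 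I would define $\bar\partial:D/KD\to R$ by $\bar\partial([d])=\phi(\partial d)$, which is well-defined because $\phi\partial(k\cdot d)=\phi(k)\phi(\partial d)=0$, and the Peiffer identity $\bar\partial([d])\cdot[d']=[dd']=[d][d']$ is inherited from that of $\partial$ through $\phi$.

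Next I would verify the universal property. The canonical projection $\psi:D\to D/KD$, paired with $\phi$, is a crossed-module morphism from $(D,S,\partial)$ to $(D/KD,R,\bar\partial)$. Given any test morphism $(f,\phi):(D,S,\partial)\to(B,R,\eta)$, I would set $f_{*}:D/KD\to B$ by $f_{*}([d])=f(d)$; this descends to the quotient because $f(k\cdot d)=\phi(k)\cdot f(d)=0$ for $k\in K$. The relations $f_{*}\psi=f$, $\eta f_{*}=\bar\partial$, multiplicativity, and $R$-equivariance of $f_{*}$ all follow immediately from the corresponding properties of $f$, while uniqueness is forced by the surjectivity of $\psi$. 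Applying the universal property of $\phi_{*}(D)$ to $(\psi,\phi)$ and that of $D/KD$ to $(\phi',\phi)$ produces mutually inverse crossed $R$-module morphisms between $\phi_{*}(D)$ and $D/KD$, hence the isomorphism.

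The principal subtlety is the bookkeeping in step two: ensuring that the $R$-action and boundary on $D/KD$ are simultaneously well-defined in both the $s$-lift and the $d$-class, and that the Peiffer identity survives. This all rests on the observation that $K=\mathrm{Ker}\,\phi$ is precisely the obstruction to transporting the $S$-structure along $\phi$, so $KD$ is the minimal submodule one must quotient out to restore compatibility; once this is in place, universality does the rest.
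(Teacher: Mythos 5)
Your proposal is correct and takes essentially the same route as the paper: both verify that $D/KD$, equipped with the $R\cong S/K$ action and boundary $[d]\mapsto\phi(\partial d)$, satisfies the universal property of the induced crossed module, with the key step in each case being that any test morphism $(f,\phi)$ kills $KD$ because $f(k\cdot d)=\phi(k)\cdot f(d)=0$. The only cosmetic difference is that you define the $R$-action by choosing lifts along $\phi$ while the paper passes through the identification $R\cong S/K$.
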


\begin{proof}
Because $S$ acts on $D/KD$, $K$ acts trivially on $D/KD$ and $\phi$ is an epimorphism, $R\cong S/K$ acts on
$D/KD$. Indeed, because of
\[
\begin{array}{rcl}
S\times D/KD & \longrightarrow & D/KD \\
\left( s,d+KD\right) & \longmapsto & s\cdot \left( d+KD\right)
=sd+KD
\end{array}
\]
and
\[
\begin{array}{rcl}
K\times D/KD & \longrightarrow & D/KD \\
\left( k,d+KD\right) & \longmapsto & k\cdot \left( d+KD\right)
=kd+KD
\end{array}
\]
$S/K$ acts on $D/KD$ as following
$$\xymatrix@R=40pt@C=40pt{
  S\times D/KD \ar[d]_{} \ar[r]^{}
                & D/KD \ar@{=}[d]^{}  \\
  S/K\times D/KD\ar[r]_{}
                & D/KD             }$$
                $$\xymatrix@R=40pt@C=40pt{
  ( s,d+KD) \ar@{|->}[d]_{} \ar@{|->}[r]^{}
                & sd+KD \ar@{|->}[d]^{}  \\
  ( s+K,d+KD) \ar@{|->}[r]
                & sd+KD             }$$

$\beta :D/KD\rightarrow R$ given by $\beta \left( d+KD\right)
=\partial (d)+K $ is a crossed $R$-module.\ Indeed,
\[
\begin{array}{llll}
& \beta \left( d+KD\right) \cdot \left( d^{\prime }+KD\right) & =
& \left(
\partial d+K\right) \cdot \left( d^{\prime }+KD\right) \\
&  & = & \partial \left( d\right) \cdot d^{\prime }+KD \\
&  & = & dd^{\prime }+KD \\
&  & = & \left( d+KD\right) \left( d^{\prime }+KD\right) \\
&  &  &
\end{array}
\]
$\left( \rho ,\phi \right) :\left( D,S,\partial \right)
\longrightarrow \left( D/KD,R,\beta \right) $ is a crossed module
morphism where $\rho :D\longrightarrow D/KD,\ \  \rho (d)=d+KD$ since
$\rho \left( s\cdot d\right) =\phi \left( s\right) \cdot \rho
\left( d\right) $.

Suppose that the following diagram of crossed module is commutative.
$$ \xymatrix@R=40pt@C=40pt{
  D \ar[d]_{\partial} \ar[r]^{\rho '}
                & D' \ar[d]^{\beta '}  \\
  S  \ar[r]_{\phi}
                & R             }
$$
Since $\rho ^{\prime }(s\cdot d)=\phi (s)\cdot \rho ^{\prime }(d)$
for any $d\in D,\,s\in S,\,$we have
\[
\rho ^{\prime }\left( k\cdot d\right) =\phi \left( k\right) \cdot
\rho ^{\prime }\left( d\right) =0\cdot \rho ^{\prime }\left(
d\right) =0
\]
so $\rho ^{\prime }\left( KD\right) =0$. Then, there is a unique morphism $%
\mu :\left( D/KD\right) \rightarrow D ^{\prime }$ given by $\mu
\left( d+KD\right) =\rho ^{\prime }\left( d\right) $ such that
$\mu \rho =\rho ^{\prime }$\ and $\mu $ is well defined, because
of $\rho ^{\prime }\left( KD\right) =0$. Finally, the diagram
$$
\xymatrix@R=40pt@C=40pt{  &D \ar[d]_{\partial} \ar[r]^{\rho} & D/KD
\ar[d]_{\beta} \ar[r]^{\mu} & D'
   \ar[dl]^{\beta '} \\
  &S \ar[r]_{\phi} & R  &    }
$$
commutes, since for all $d\in D$
\[
\begin{array}{lllll}
\beta \left( d+KD\right) & = & \phi \partial d &  &  \\
& = & \partial d+K &  &  \\
& = & \beta ^{\prime }\rho ^{\prime }\left( d\right) &  &  \\
& = & \beta ^{\prime }\mu \left( d+KD\right) &  &
\end{array}
\]
and
\[
\mu (r\cdot (d+KD))=\mu ((s\cdot d)+KD)=\mu (\rho (s\cdot d))=\rho
^{\prime }\left( s\cdot d\right)\] \[ \ \ \ \ \ \ \ \ \ \ \  \ \ \ \
\ \ \ \ \ \ =\phi \left( s\right) \cdot \rho ^{\prime }\left(
d\right) =r\cdot \mu \rho (d)=r\cdot \mu (d+KD)
\]
so $\mu $ preserves the actions.
\end{proof}

\subsubsection{Monomorphism case:}

In this subsection we consider the crossed modules induced by a
morphism $\phi :S\rightarrow R$ of $k$-algebras, the particular
case when $S$ is an ideal of $R.$

If $d\in D$, then the class of $d$ in $D/D^{2}$ is written as $\left[ d%
\right] $. Then the augmentation ideal of $I(R/S)$
of a quotient algebra $R/S$
has the basis \{\={e}$_{i_{1}}$\={e}$_{i_{2}}\ldots $\={e}$%
_{i_{p}},i_{1}\leq i_{2}\leq \cdots i_{p},i_{j}\in I$\}$_{(i)\neq
\emptyset
} $, where \={e}$_{i_{j}}$ is the projection of \ the basic element e$%
_{i_{j}}\in I(R)$ on $R/S.$

\begin{theorem}
Let $D$ $\subseteq S$ be ideals of $R$ so that $R$ acts on $S$ and
$D$ by multiplication. Let $\partial :D\rightarrow S$ , $\phi
:S\rightarrow R$ be the inclusions and let ${\cal D}$ denote the
crossed module $(D,S,\partial )$ with the multiplication action.
Then the induced crossed $R$-module $\phi _{\ast }\left( {\cal
D}\right) $ is isomorphic as a crossed $R$-module to
\[
\begin{array}{cccc}
\zeta : & D\times \left( D/D^{2}\otimes I\left( R/S\right) \right)
&
\longrightarrow & R \\
& \left( d,\left[ t\right] \otimes \bar{x}\right) & \longmapsto &
d.
\end{array}
\]
The action is given by
\[
r\cdot (d,\left[ t\right] \otimes \overline{x})=(r\cdot d,\left[
d\right]
\otimes \overline{r}+\left[ t\right] \otimes \overline{r}x-\left[ x\cdot t%
\right] \otimes \overline{r})
\]
for $d,\ t\in D$ ; $\bar{x}\in I(R/S)$ where $\overline{r}$,
$\overline{x}$ denote the image of $r$, $x$ in $R/S$,
respectively.
\end{theorem}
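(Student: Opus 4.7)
I propose to verify that $(T,R,\zeta)$ with $T := D \times (D/D^{2}\otimes I(R/S))$ is a crossed $R$-module satisfying the universal property of $\phi_{\ast}(\mathcal{D})$, then obtain the isomorphism by invoking that universal property and constructing an explicit inverse. The strategy mirrors the Brown--Wensley treatment for groups cited in the paper.

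\textbf{Step 1 (Crossed module structure on $T$).} First I would check that the prescribed formula $r\cdot(d,[t]\otimes\bar{x})=(r\cdot d,[d]\otimes\bar{r}+[t]\otimes\bar{r}x-[x\cdot t]\otimes\bar{r})$ descends to $D/D^{2}$ in every bracket slot and is bilinear in the tensor factor, so defines a genuine map $R\times T\to T$. The main substantive check is the action axiom $r_{1}\cdot(r_{2}\cdot a)=(r_{1}r_{2})\cdot a$: the three correction terms are engineered precisely so that, upon iterating, the unwanted summands cancel using $\bar{d}=0$ for $d\in D$ (since $D\subseteq S$) and the identity $x\cdot t\in D$. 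The $R$-algebra structure on $T$ is then forced by the Peiffer identity, $(a)(b):=\zeta(a)\cdot b$, and simplifies to $(d_{1},y_{1})(d_{2},y_{2})=(d_{1}d_{2},0)$ because $\bar{d_{1}}=0$, so commutativity and associativity are immediate. The crossed module axioms for $\zeta$ then follow: $\zeta(r\cdot a)=r\cdot d=r\cdot\zeta(a)$, and Peiffer holds by construction.

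\textbf{Step 2 (Canonical morphism from $\mathcal{D}$).} Define $\iota:D\to T$ by $\iota(d)=(d,0)$. Then $\zeta\iota=\phi\partial$ is the inclusion $D\hookrightarrow R$, and
\[
\iota(s\cdot d)=(sd,\,[d]\otimes\bar{s}+0-0)=(sd,0)=\phi(s)\cdot\iota(d)
\]
since $s\in S$ forces $\bar{s}=0$. Hence $(\iota,\phi):(D,S,\partial)\to(T,R,\zeta)$ is a morphism of crossed modules.

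\textbf{Step 3 (Invoking the universal property).} By the universal property established for $\phi_{\ast}$ in Section~3.1, there is a unique $R$-algebra morphism $\theta:\phi_{\ast}(D)\to T$ with $\theta\circ\phi'=\iota$. Using the $R$-action $r\cdot(d\otimes 1)=d\otimes r$ on $\phi_{\ast}(D)$, this is forced to be $\theta(d\otimes r)=r\cdot\iota(d)=(rd,\,[d]\otimes\bar{r})$.

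\textbf{Step 4 (Inverse).} I would define $\psi:T\to\phi_{\ast}(D)$ on the first factor by $\psi(d,0)=d\otimes 1$ and, on simple tensors of the second factor, by $\psi(0,[t]\otimes\bar{x})=t\otimes x-(x\cdot t)\otimes 1$, extended additively. This formula is designed so that it vanishes when $t\in D^{2}$ (using the relation $(d_{1}\otimes r_{1})(d_{2}\otimes r_{2})=d_{1}d_{2}\otimes r_{1}r_{2}$ in $\phi_{\ast}(D)$) and when $x\in S$ (using $d\otimes s=sd\otimes 1$), so $\psi$ is well defined. Checking $\theta\psi=\mathrm{id}_{T}$ and $\psi\theta=\mathrm{id}_{\phi_{\ast}(D)}$ on generators completes the isomorphism of crossed $R$-modules.

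\textbf{Main obstacle.} The principal difficulty is twofold: verifying associativity of the $R$-action in Step~1 and verifying well-definedness of $\psi$ in Step~4 on both tensor slots. The well-definedness of $\psi$ on the $I(R/S)$-slot is the most delicate point, because the correction $-(x\cdot t)\otimes 1$ must absorb exactly the failure of $t\otimes x$ to depend only on the class $\bar{x}\in R/S$; this relies on trading $t\otimes s=st\otimes 1$ against $(s\cdot t)\otimes 1$, which is precisely the mechanism already encoded in the three-term formula for the action on $T$. Once this compatibility is established, the isomorphism $\phi_{\ast}(\mathcal{D})\cong(T,R,\zeta)$ follows from uniqueness in the universal property applied in both directions.
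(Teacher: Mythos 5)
Your plan is correct in its conclusions, and Steps 1--2 coincide with the paper's opening (which, incidentally, only checks the Peiffer identity for $\zeta$; your verification that $r_{1}\cdot(r_{2}\cdot a)=(r_{1}r_{2})\cdot a$ is a genuine addition). From Step 3 onward, however, you take a different route. The paper never compares $T=D\times(D/D^{2}\otimes I(R/S))$ with the concrete model $D\otimes_{S}R$: it proves that $(T,R,\zeta)$ itself satisfies the universal property of the induced crossed module against an arbitrary target $(\beta,\phi):\mathcal{D}\rightarrow(C,R,\alpha)$, first deriving the forced formula $\widetilde{\phi}(d,[t]\otimes\bar{e}_{(i)})=\beta(d)+e_{(i)}\cdot\beta(t)-\beta(e_{(i)}\cdot t)$ (uniqueness) and then establishing existence via the auxiliary maps $\gamma_{r}(d)=r\cdot\beta(d)-\beta(r\cdot d)$, shown to land in $\mathrm{Ann}(C)$, to kill $D^{2}$, and to depend only on $\bar{r}\in R/S$; the isomorphism with $\phi_{\ast}(\mathcal{D})$ then follows from uniqueness of universal objects. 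Your $\psi$ on the second factor, $[t]\otimes\bar{x}\mapsto t\otimes x-(x\cdot t)\otimes 1$, is exactly the paper's $\gamma$ specialized to $C=\phi_{\ast}(D)$, $\beta=\phi'$, so the well-definedness you flag as the main obstacle is the same computation as the paper's steps 3.5--3.7, only carried out inside the quotient algebra $F(D\times R)/P$. What your route buys: action-preservation of the isomorphism is automatic once the crossed-module morphism $\theta$ is bijective, whereas the paper must verify by hand that $\widetilde{\phi}$ preserves the action (its self-described crucial part). What the paper's route buys: it never manipulates the defining relations of $D\otimes_{S}R$, and it establishes the universal property of $\mathcal{T}$ directly. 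One caveat shared by both treatments, which you should make explicit: the action formula depends a priori on the representative $x$ of $\bar{x}$ (replacing $x$ by $x+s$ with $s\in S$ perturbs the output by $-[s\cdot t]\otimes\bar{r}$, which is not obviously zero in $D/D^{2}\otimes I(R/S)$); the paper sidesteps this by fixing a monomial basis $\{\bar{e}_{(i)}\}$ of $I(R/S)$ and defining everything on basis elements with canonical representatives, and your associativity check in Step 1 and the well-definedness of $\psi$ in the $\bar{x}$ slot should be carried out under the same convention.
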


\begin{proof}
First we will show that
\[
{\cal T}=(\zeta :T=\left( D\times (D/D^{2}\otimes I(R/S))\right)
\rightarrow R),\,\zeta \left( d,\left[ t\right] \otimes
\bar{x}\right) =d
\]
is a crossed module with the given action:
\[
\begin{array}{llll}
& \zeta \left( d^{\prime },\left[ t^{\prime }\right] \otimes
\bar{x}^{\prime }\right) \cdot \left( d,\left[ t\right] \otimes
\bar{x}\right) & = &
d^{\prime }\cdot \left( d,\left[ t\right] \otimes \bar{x}\right) \\
&  & = & \left( d^{\prime }\cdot d,\left[ d\right] \otimes \overline{%
d^{\prime }}+\left[ t\right] \otimes \overline{d^{\prime }}x-\left[ x\cdot t%
\right] \otimes \overline{d^{\prime }}\right) \\
&  & = & \left( d^{\prime }d,0\right) \\
&  & = & \left( d^{\prime }d,\left[ t^{\prime }t\right] \otimes \bar{x}%
^{\prime }\bar{x}\right) \\
&  & = & \left( d^{\prime },\left[ t^{\prime }\right] \otimes \bar{x}%
^{\prime }\right) \left( d,\left[ t\right] \otimes \bar{x}\right)
\end{array}
\]
Consider $i:D\rightarrow D\times \left( D/D^{2}\otimes I\left(
R/S\right) \right) ,\,i(d)=\left( d,0\right) .$ We have the
following diagram
$$
\xymatrix@R=40pt@C=40pt{&&&& C\ar[ddl]^{\alpha}\\
  & & D\ar[urr]^{\beta} \ar[d]_{\partial} \ar[r]^{i} &   T
   \ar[d]^{\zeta}\ar@{.>}[ur]|-{\widetilde{\phi}} \\
  && S \ar[r]_{\phi} & R.   }
$$
Clearly we have a morphism of crossed modules $\left( i,\phi \right)
:{\cal D}\rightarrow {\cal T}$. \ We just verify that this morphism
satisfies condition ii) of Definition \ref{d1}. That is, when a
morphism of crossed module $(\beta ,\phi ):(D,S,\partial
)\rightarrow (C,R,\alpha )$ is given we prove that there is a unique
morphism $\widetilde{\phi } :T=D\times (D/D^{2}\otimes
I(R/S))\rightarrow C$ such that $\widetilde{\phi } i=\beta $ and
$\alpha \widetilde{\phi }=\zeta $. Since $\widetilde{\phi }$ has to
be a homomorphism and preserve the action we have
\[
\begin{array}{llll}
\widetilde{\phi }\left( d,\left[ t\right] \otimes
\bar{e}_{(i)}\right) & = &
\widetilde{\phi }\left( (d,0)+\left( 0,\left[ t\right] \otimes \overline{e}%
_{(i)}\right) \right) &  \\
& = & \widetilde{\phi }\left( (d,0)+(e_{(i)}\cdot t,\left[
t\right] \otimes
\overline{e}_{(i)})+(-e_{(i)}\cdot t,0)\right) &  \\
& = & \widetilde{\phi }((d,0)+\left( e_{(i)}\cdot
(t,0)+(-e_{(i)}\cdot
t,0)\right) &  \\
& = & \widetilde{\phi }((d,0))+\widetilde{\phi }\left( e_{(i)}\cdot (t,0)+(-e_{(i)}\cdot t,0)\right) &  \\
& = & \widetilde{\phi }((d,0))+\widetilde{\phi }( e_{(i)}\cdot
(t,0))+\widetilde{\phi }(-e_{(i)}\cdot t,0) &  \\
& = & \widetilde{\phi }i(d)+e_{(i)}\cdot \widetilde{\phi }i(t)-\widetilde{%
\phi }i(e_{(i)}\cdot t) &  \\
& = & \beta (d)+e_{(i)}\cdot \beta (t)-\beta (e_{(i)}\cdot t)
\end{array}
\]
for any $d\in D,\left( \left[ t\right] \otimes
\bar{e}_{(i)}\right) \in D/D^{2}\otimes I\left( R/S\right)$. This
proves uniqueness of any such a $\widetilde{\phi }.$ We now prove
that this formula gives a well-defined morphism.

It is immediate from the formula that $\widetilde{\phi }:D\times
(D/D^{2}\otimes I(R/S))\rightarrow C$ \ has to be $\beta $ on the
first factor and is defined on the second one by the map $$\left[
t\right] \otimes \bar{e}_{(i)}\longmapsto e_{(i)}\cdot \beta
(t)-\beta (e_{(i)}\cdot t).$$ We have to check that this latter map
is well defined homomorphism.

We define the function
\[
\begin{array}{llll}
\gamma _{r}: & D & \longrightarrow & C \\
& d & \longmapsto & r\cdot \beta (d)-\beta \left( r\cdot d\right)
\end{array}
\]
and prove in turn the following statements.

{\bf 3.5} $\gamma _{r}(D)$ is contained in the annihilator
Ann($C$) of$\ C$.

{\bf Proof of 3.5 }We use the fact that if $d\in D$, then
\[
\begin{array}{llll}
\alpha \gamma _{r}(d) & = & \alpha \left( r\cdot \beta (d)-\beta
\left(
r\cdot d\right) \right) &  \\
& = & \alpha \left( r\cdot \beta (d))-\alpha (\beta \left( r\cdot
d\right)
\right) &  \\
& = & r\alpha \left( \beta (d))-\phi \partial \left( r\cdot d\right)
\right)\ \ \ \ \ (\text{since}\  \alpha\beta=\phi \partial)
&  \\
& = & r\phi \partial (d)-\phi \left( r\partial \left( d\right) \right) &  \\
& = & r\partial (d)-r\partial \left( d\right) =0 &
\end{array}
\]
and that $\left( C,R,\alpha \right) $ is a crossed module. So
$\gamma _{r}(D)\subseteq $Ker$\alpha =$Ann$(C)$

{\bf 3.6 }$\gamma _{r}$ is a morphism which factors through
$D/D^{2}$.

{\bf Proof of 3.6 } Let $d,d^{\prime }\in D$ be, then
\[
\begin{array}{lll}
\gamma _{r}(dd^{\prime }) & =r\cdot \beta \left( dd^{\prime
}\right) -\beta
\left( r\cdot \left( dd^{\prime }\right) \right) &  \\
& =r\cdot \left( \beta \left( d\right) \beta \left( d^{\prime
}\right)
\right) -\beta \left( r\cdot d\right) \beta \left( d^{\prime }\right) &  \\
& =r\cdot \beta \left( d\right) \beta \left( d^{\prime }\right)
-\beta
\left( r\cdot d\right) \beta \left( d^{\prime }\right) &  \\
& =\gamma _{r}(d)\beta (d^{\prime }) &  \\
& =0\  \qquad\qquad\qquad\qquad (\text{since}\ \gamma _{r}(d) \ \text{belongs to}\ Ann(C))&\\
& =\gamma _{r}(d)\gamma _{r}(d^{\prime }) &
\end{array}
\]
Consequently $\gamma _{r}$ is a homomorphism of commutative
algebras that factors through $D/D^{2}.$

{\bf 3.7 } We define a morphism
\[
\begin{array}{rccc}
\gamma : & D/D^{2}\otimes I(R/S) & \longrightarrow & C \\
& \left[ d\right] \otimes \overline{e}_{(i)} & \longmapsto &
\gamma _{e_{(i)}}(d)
\end{array}
\]

{\bf Proof of 3.7 \ }Since $\beta $ is a $S$-equivariant
homomorphism, the morphisms $\gamma _{r}$ depend only on the classes
$\overline{r}$ of $r$ in $R/S$. Thus, we define the morphism $\gamma
$ as mentioned.

{\bf 3.8 }The function $\widetilde{\phi }$ defined in the theorem satisfies $%
\widetilde{\phi }i=\beta $ and is a well defined morphism of
$R$-crossed modules.

{\bf Proof of 3.8 }The function $\widetilde{\phi }$ is clearly a
well-defined morphism of commutative algebras, since it is of the
form $ \widetilde{\phi }(d,u)=\beta (d)+\gamma(u) $, where $\beta $
and $\gamma $ are the homomorphisms of commutative algebras and
$\gamma \left( u\right) $ belongs to the annihilator of $C$.
\[
\begin{array}{ll}
\widetilde{\phi }\left( \left( d,u\right) \left( d^{\prime
},u^{\prime }\right) \right) & =\widetilde{\phi }\left( dd^{\prime
},uu^{\prime }\right)
\\
& =\beta (dd^{\prime })+\gamma \left( uu^{\prime }\right) \\
& =\beta (d)\beta (d^{\prime })+\gamma \left( u\right) \gamma
\left(
u^{\prime }\right) \\
& =\beta (d)\beta (d^{\prime })+\beta (d)\gamma \left( u^{\prime
}\right) +\gamma \left( u\right) \beta (d^{\prime })+\gamma \left(
u\right) \gamma
\left( u^{\prime }\right) \ (\text{since}\ \gamma(u),\ \gamma(u')\in Ann(C))\\
& =\left( \beta (d)+\gamma \left( u\right) \right) \left( \beta
(d^{\prime
})+\gamma \left( u^{\prime }\right) \right) \\
& =\widetilde{\phi }\left( d,u\right) \widetilde{\phi }\left(
d^{\prime },u^{\prime }\right)
\end{array}
\]
Further, $\widetilde{\phi }i=\beta $ and $\alpha \widetilde{\phi
}=\zeta $, as $\alpha \gamma $ is trivial:
\[
\begin{array}{lll}
\alpha \gamma \left( \left[ t\right] \otimes \bar{e}_{(i)}\right)
& = & \alpha \left( e_{(i)}\cdot \beta \left( t\right) -\beta
\left( e_{(i)}\cdot
t\right) \right) \\
& \stackrel{}{=} & e_{(i)}\cdot \alpha \left( \beta \left(
t\right) \right)
-\alpha \left( \beta \left( e_{(i)}\cdot t\right) \right) \\
& = & e_{(i)}\cdot \phi \partial \left( t\right) -\phi \partial
\left(
e_{(i)}\cdot t\right) \ \ \ \ \ (\text{since}\  \alpha\beta=\phi \partial)\\
& = & e_{(i)}\cdot \partial \left( t\right) -\partial \left(
e_{(i)}\cdot
t\right) \\
& = & 0.
\end{array}
\]
Finally, we prove that $\widetilde{\phi }$ preserves the action.
This is the crucial part of the argument. Let $d,\ t\in D$,$r\in R$ and $\overline{e}%
_{(i)}$ be an element in the basis of $I(R/S)$, then
\[
\begin{array}{lll}
\widetilde{\phi }\left( r\cdot \left( d,\left[ t\right] \otimes \overline{e}%
_{(i)}\right) \right) & = & \widetilde{\phi }\left( r\cdot
d,\left[ d\right] \otimes \overline{r}+\left[ t\right] \otimes
\overline{r}e_{(i)}-\left[
e_{(i)}\cdot t\right] \otimes \overline{r}\right) \\
& = & \beta (r\cdot d)+\gamma \left( \left[ d\right] \otimes \overline{r}+%
\left[ t\right] \otimes \overline{r}e_{(i)}-\left[ e_{(i)}\cdot
t\right]
\otimes \overline{r}\right) \ \ \ (\text{since}\ \widetilde{\phi }(d,u)=\beta (d)+\gamma(u) )\\
& = & \beta \left( r\cdot d\right) +\gamma _{r}\left( d\right)
+\gamma
_{re_{(i)}}\left( t\right) -\gamma _{r}\left( e_{(i)}\cdot t\right) \ \ \ \ \ \ \ \ \ (\text{definition of}\ \gamma )\\
& = & \beta \left( r\cdot d\right) +r\cdot \beta \left( d\right)
-\beta \left( r\cdot d\right) +re_{(i)}\cdot \beta \left( t\right)
-\beta \left(
re_{(i)}\cdot t\right)  \ \ \ \ \ (\text{definition of}\ \gamma_r )\\
&  & -r\cdot \beta \left( e_{(i)}\cdot t\right) +\beta \left( r\cdot
\left(
e_{(i)}\cdot t\right) \right) \\
& = & \beta \left( r\cdot d\right) +re_{(i)}\cdot \beta \left(
t\right)
-r\cdot \beta \left( e_{(i)}\cdot t\right) \\
& = & r\cdot \left( \beta \left( d\right) +e_{(i)}\cdot \beta
\left(
t\right) -\beta \left( e_{(i)}\cdot t\right) \right) \\
& = & r\cdot \left( \beta \left( d\right) +\gamma _{e_{(i)}}\left(
t\right)
\right) \ \ \ \  \ \ \ \ \ (\text{definition of}\   \widetilde{\phi }) \\
& = & r\cdot \widetilde{\phi }\left( d,\left[ t\right] \otimes \overline{e}%
_{(i)}\right).
\end{array}
\]
\end{proof}

\textbf{Note:} As the free crossed modules are the special case of
the induced crossed modules we can discuss the free crossed module
$C\rightarrow R$ of commutative algebras which was shown in
\cite{[p2]} to have $C\cong R^{n}/d(\Lambda ^{2}R^{n})$, i.e. the
$2$nd Koszul complex term module the $2$-boundaries where $d:\Lambda
^{2}R^{n}\rightarrow R^{n}$ the Koszul differential. The idea is
that $X$ is a finite set and given a function $f:X\rightarrow R,$
then one can get
\[
C=R^{+}[X]/P
\]
constructed in section \ref{s5} where $P$ is generated by the
elements $\ pq-\theta (p)q$ for $p,q\in R^{+}[X].$ When we take
$S=k^{+}[X]$ in a morphism $\phi :S\rightarrow R,$ there is the
induced crossed module
\[
\partial :k^{+}[X]\otimes _{k^{+}[X]}R\rightarrow R
\]
and restate
\[
k^{+}[X]\otimes _{k^{+}[X]}R\cong R^{n}/d\left( \Lambda
^{2}R^{n}\right)
\]
This connection with the Koszul complex is taken in
\cite{[p1],[p2]}. (see also \cite{[ap]})

Ummahan Ege Arslan \\
uege@ogu.edu.tr\\
Eski\c sehir Osmangazi University, Department of Mathematics and Computer Sciences, \\
26480, Eski\c sehir, Turkey\\

\noindent\"{O}zg\"un G\"urmen \\
ogurmen@dpu.edu.tr\\
Dumlup\i nar University, Department of Mathematics,
43270, K\"{u}tahya, Turkey

\end{document}